 \newtheorem{theorem}{\sc\bf Theorem}[section]
 \newtheorem{lemma}[theorem]{\sc\bf Lemma}
 \newtheorem{definition}[theorem]{\sc\bf Definition}
 \newtheorem{example}[theorem]{\sc\bf Example}
  \numberwithin{equation}{section}
\def\@cite#1#2{#1\if@tempswa , #2\fi}
\title{{\bf A note on property $(gb)$ and perturbations}
\thanks{This work has been supported by National Natural Science Foundation of China (11171066), Specialized Research Fund for the Doctoral
Program of Higher Education (2010350311001, 20113503120003), Natural
Science Foundation of Fujian Province (2009J01005, 2011J05002) and
Foundation of the Education Department of Fujian Province (JB10042,
JA08036).} }
\author{Qingping \textsc{Zeng}\thanks{Email address: zqpping2003@163.com.}
 \quad Huaijie \textsc{Zhong}
\\ \small (School of Mathematics and Computer Science, Fujian Normal
University, Fuzhou 350007, P.R. China) }
\begin{document}
\date{}
\maketitle

\large

\begin{quote}
 {\bf Abstract:} ~An operator $T \in \mathcal{B}(X)$ defined on a
 Banach space $X$ satisfies property $(gb)$ if the complement in the
 approximate point spectrum $\sigma_{a}(T)$ of the upper semi-B-Weyl
 spectrum $\sigma_{SBF_{+}^{-}}(T)$ coincides with the set $\Pi(T)$ of all poles of the resolvent of
 $T$. In this note we continue to study property
 $(gb)$ and the stability of it, for a bounded linear operator $T$ acting on a Banach space,
 under perturbations by nilpotent operators, by finite rank operators, by
 quasi-nilpotent operators commuting with $T$. Two counterexamples
 show
 that property $(gb)$ in general is not preserved under commuting quasi-nilpotent
 perturbations or commuting finite rank perturbations. \\
{\bf  2010 Mathematics Subject Classification:} primary 47A10, 47A11; secondary 47A53, 47A55   \\
{\bf Key words:} Generalized a-Browder's theorem; property $(gb)$;
eventual topological uniform descent; commuting perturbation.
\end{quote}

\section{Introduction}
 \quad\,~Throughout this note, let $\mathcal{B}(X)$ denote the Banach
algebra of all bounded linear operators acting on an infinite
dimensional
 complex Banach space $X$, and $\mathcal{F}(X)$ denote its ideal of finite rank operators on $X$.
For an operator $T \in \mathcal{B}(X)$, let $T^{*}$ denote its dual,
$\mathcal {N}(T)$ its kernel, $\alpha(T)$ its nullity, $\mathcal
{R}(T)$ its range, $\beta(T)$ its defect, $\sigma(T)$ its spectrum
and $\sigma_{a}(T)$ its approximate point spectrum. If the range
$\mathcal {R}(T)$ is closed and $\alpha(T) < \infty$ (resp.
$\beta(T) < \infty$), then $T$ is said to be $upper$
$semi$-$Fredholm$ (resp. $lower$ $semi$-$Fredholm$).  If $T \in
\mathcal{B}(X)$ is both upper and lower semi-Fredholm, then $T$ is
said to be $Fredholm$. If $T \in \mathcal{B}(X)$ is either upper or
lower semi-Fredholm, then $T$ is said to be $semi$-$Fredholm$, and
its index is defined by
\begin{upshape}ind\end{upshape}$(T)$ = $\alpha(T)-\beta(T)$.
The $upper$ $semi$-$Weyl$ $operators$ are defined as the class of
upper semi-Fredholm operators with index less than or equal to zero,
while $Weyl$ $operators$ are defined as the class of Fredholm
operators of index zero. These classes of operators generate the
following spectra: the $Weyl$ $spectrum$ defined by
$$\sigma_{W}(T):= \{ \lambda \in \mathbb{C}: T - \lambda I \makebox{ is not a Weyl operator} \},$$
the $upper$ $semi$-$Weyl$ $spectrum$ (in literature called also
$Weyl$ $essential$ $approximate$ $point$ $spectrum$) defined by
$$\sigma_{SF_{+}^{-}}(T):= \{ \lambda \in \mathbb{C}: T - \lambda I \makebox{ is not a upper semi-Weyl operator}\}.$$

 Recall that the
$descent$ and the $ascent$ of $T \in \mathcal{B}(X)$ are
 $dsc(T)= \inf \{n \in \mathbf{\mathbb{N}}:\mathcal {R}(T^{n})= \mathcal {R}(T^{n+1})\}$
 and $asc(T)=\inf \{n \in \mathbf{\mathbb{N}}:\mathcal {N}(T^{n})= \mathcal {N}(T^{n+1})\}$,
 respectively (the infimum of an empty set is defined to be $\infty$).
 If $asc(T) < \infty$ and $\mathcal {R}(T^{asc(T)+1})$ is closed,
then $T$ is said to be $left$ $Drazin$ $invertible$. If $dsc(T) <
\infty $ and $\mathcal {R}(T^{dsc(T)})$ is closed, then $T$ is said
to be $right$ $Drazin$ $invertible$. If $asc(T) = dsc(T) < \infty $,
then $T$ is said to be $Drazin$ $invertible$. Clearly, $T \in
\mathcal{B}(X)$ is both left and right Drazin invertible if and only
if $T$ is Drazin invertible. An operator $T \in \mathcal{B}(X)$ is
called $upper$ $semi$-$Browder$ if it is a upper semi-Fredholm
operator with finite ascent, while $T$ is called
 $Browder$ if it is a Fredholm operator of finite ascent and descent. The $Browder$
 $spectrum$ of $T \in \mathcal{B}(X)$ is defined by
$$\sigma_{B}(T):= \{ \lambda \in \mathbb{C}: T - \lambda I \makebox{ is not a Browder operator} \},$$
the $upper$ $semi$-$Browder$ $spectrum$ (in literature called also
$Browder$ $essential$ $approximate$ $point$ $spectrum$) is defined
by
$$\sigma_{UB}(T):= \{ \lambda \in \mathbb{C}: T - \lambda I \makebox{ is not a upper semi-Browder operator}\}.$$
An operator $T \in \mathcal{B}(X)$ is called $Riesz$ if its
essential spectrum $\sigma_e(T):= \{ \lambda \in \mathbb{C}: T -
\lambda I \makebox{ is not Fredholm} \}=\{0\}$.

Suppose that $T \in \mathcal{B}(X)$ and that $R \in \mathcal{B}(X)$
is a Riesz operator commuting with $T$. Then it follows from
[\cite{Tylli}, Proposition 5] and [\cite{Rakocevic}, Theorem 1] that
 \begin{equation} {\label{eq 1.1}}
 \qquad\qquad\qquad\qquad\qquad\quad \ \     \sigma_{SF_{+}^{-}}(T+R) = \sigma_{SF_{+}^{-}}(T);
\end{equation} \vspace{-4mm}
\begin{equation} {\label{eq 1.2}}
 \qquad\qquad\qquad\qquad\qquad\qquad \ \     \sigma_{W}(T+R) =
 \sigma_{W}(T);
\end{equation} \vspace{-4mm}
\begin{equation} {\label{eq 1.3}}
 \qquad\qquad\qquad\qquad\qquad\quad \ \     \sigma_{UB}(T+R) = \sigma_{UB}(T);
\end{equation} \vspace{-4mm}
\begin{equation} {\label{eq 1.4}}
 \qquad\qquad\qquad\qquad\qquad\qquad \ \     \sigma_{B}(T+R) =
 \sigma_{B}(T).
\end{equation}

For each integer $n$, define $T_{n}$ to be the restriction of $T$ to
$\mathcal{R}(T^{n})$ viewed as the map from $\mathcal{R}(T^{n})$
into $\mathcal{R}(T^{n})$ (in particular $T_{0} = T $). If there
exists $n \in \mathbb{N}$ such that $\mathcal {R}(T^{n})$ is closed
and $T_{n}$ is upper semi-Fredholm, then $T$ is called $upper$
$semi$-$B$-$Fredholm$. It follows from [\cite{Berkani
semi-B-fredholm}, Proposition 2.1] that if there exists $n \in
\mathbb{N}$ such that $\mathcal {R}(T^{n})$ is closed and $T_{n}$ is
upper semi-Fredholm, then $\mathcal {R}(T^{m})$ is closed, $T_{m}$
is upper semi-Fredholm and
\begin{upshape}ind\end{upshape}$(T_{m})$ = \begin{upshape}ind\end{upshape}$(T_{n})$ for all $m \geq
n$. This enables us to define the index of a upper semi-B-Fredholm
operator $T$ as the index of the upper semi-Fredholm operator
$T_{n}$, where $n$ is an integer satisfying $\mathcal {R}(T^{n})$ is
closed and $T_{n}$ is upper semi-Fredholm. An operator $T \in
\mathcal {B}(X)$ is called $upper$ $semi$-$B$-$Weyl$ if $T$ is upper
semi-B-Fredholm and
\begin{upshape}ind\end{upshape}$(T) \leq 0$.

For $T \in \mathcal{B}(X)$, let us define the $left$ $Drazin$
$spectrum$, the $Drazin$ $spectrum$ and the $upper$
$semi$-$B$-$Weyl$ $spectrum$ of $T$  as follows respectively:
$$ \sigma_{LD}(T) := \{ \lambda \in \mathbb{C}: T - \lambda I \makebox{ is not a left Drazin invertible operator} \};$$
$$ \sigma_{D}(T):= \{ \lambda \in \mathbb{C}: T - \lambda I \makebox{ is not a Drazin invertible operator} \};$$
$$ \sigma_{SBF_{+}^{-}}(T) := \{ \lambda \in \mathbb{C}: T - \lambda I \makebox{ is not a upper semi-B-Weyl
operator} \}.$$

Let $\Pi(T)$ denote the set of all poles of $T$. We say that
$\lambda \in \sigma_{a}(T)$ is a left pole of $T$ if $T-\lambda I$
is left Drazin invertible. Let $\Pi_{a}(T)$ denote the set of all
left poles of $T$. It is well know that $\Pi(T)=\sigma(T) \backslash
\sigma_{D}(T) = \makebox{iso}\sigma(T) \backslash \sigma_{D}(T)$ and
$\Pi_{a}(T)=\sigma_{a}(T) \backslash
\sigma_{LD}(T)=\makebox{iso}\sigma_{a}(T) \backslash \sigma_{LD}(T)
$. Here and henceforth, for $A \subseteq \mathbb{C}$,
$\makebox{iso}A$ is the set of isolated points of $A$. An operator
$T \in \mathcal{B}(X)$ is called $a$-$polaroid$ if
$\makebox{iso}\sigma_{a}(T)= \varnothing $ or every isolated point
of $\sigma_{a}(T)$ is a left pole of $T$.

Following Harte and Lee [\cite{Harte-Lee}] we say that $T \in
\mathcal{B}(X)$ satisfies Browder's theorem if
$\sigma_{W}(T)=\sigma_{B}(T)$. While, according to Djordjevi\'c and
Han [\cite{Djordjevic-Han}], we say that $T$ satisfies a-Browder's
theorem if $\sigma_{SF_{+}^{-}}(T)=\sigma_{UB}(T)$.

The following two variants of Browder's theorem have been introduced
by Berkani and Zariouh [\cite{Berkani-Zariouh Extended}] and Berkani
and Koliha [\cite{Berkani-Koliha}], respectively.

\begin {definition}${\label{1.1}}$ \begin{upshape}
An operator $T \in \mathcal{B}(X)$ is said to possess property
$(gb)$ if $$\sigma_{a}(T) \backslash \sigma_{SBF_{+}^{-}}(T) =
\Pi(T).$$ While $T \in \mathcal{B}(X)$ is said to satisfy
generalized a-Browder's theorem if $$\sigma_{a}(T) \backslash
\sigma_{SBF_{+}^{-}}(T) = \Pi_{a}(T).$$
\end{upshape}
\end{definition}

From formulas (\ref{eq 1.1})--(\ref{eq 1.4}), it follows immediately
that Browder's theorem and a-Browder's theorem are preserved under
commuting Riesz perturbations.  It is proved in [\cite{AmouchM
ZguittiH equivalence}, Theorem 2.2] that generalized a-Browder's
theorem is equivalent to a-Browder's theorem. Hence, generalized
a-Browder's theorem is stable under commuting Riesz perturbations.
That is, if $T \in \mathcal{B}(X)$ satisfies generalized a-Browder's
theorem and $R$ is a Riesz operator commuting with $T$, then $T+R$
satisfies generalized a-Browder's theorem.

The single-valued extension property was introduced by Dunford in
[\cite{Dunford 1},\cite{Dunford 2}] and has an important role in
local spectral theory and Fredholm theory, see the recent monographs
[\cite{Aiena}] by Aiena and [\cite{Laursen-Neumann}] by Laursen and
Neumann.

\begin {definition}${\label{1.1}}$ \begin{upshape}
An operator $T \in \mathcal{B}(X)$ is said to have the single-valued
extension property at $\lambda_{0} \in \mathbb{C}$ (SVEP at
$\lambda_{0}$ for brevity), if for every open neighborhood $U$ of
$\lambda_{0}$ the only analytic function $f:U \rightarrow X$ which
satisfies the equation $(\lambda I-T)f(\lambda) = 0$ for all $
\lambda \in U$ is the function $f (\lambda ) \equiv 0 $.

Let $S(T):= \{\lambda \in \mathbb{C}: T \makebox{ does not have the
SVEP at } \lambda \}$. An operator $T \in \mathcal{B}(X)$ is said to
have SVEP if $S(T)=\varnothing$.
\end{upshape}
\end{definition}

In this note we continue the study of property $(gb)$ which is
studied in some recent papers [\cite{Berkani-Zariouh
Extended},\cite{Berkani-Zariouh New Extended},\cite{Rashid}]. We
show that property $(gb)$ is satisfied by an operator $T$ satisfying
$S(T^{*}) \subseteq \sigma_{SBF_{+}^{-}}(T)$. We give a revised
proof of [\cite{Rashid}, Theorem 3.10] to prove that property $(gb)$
is preserved under commuting nilpotent perturbations. We show also
that if $T \in \mathcal{B}(X)$ satisfies $S(T^*) \subseteq
\sigma_{SBF_{+}^{-}}(T)$ and $F$ is a finite rank operator commuting
with $T$, then $T+F$ satisfies property $(gb)$. We show that if $T
\in \mathcal{B}(X)$ is a a-polaroid operator satisfying property
$(gb)$ and $Q$ is a quasi-nilpotent operator commuting with $T$,
then $T+Q$ satisfies property $(gb)$. Two counterexamples are also
given to show
 that property $(gb)$ in general is not preserved under commuting quasi-nilpotent
 perturbations or commuting finite rank perturbations. These results
 improve and revise some recent results of Rashid in
 [\cite{Rashid}].

\section{Main results}

\quad\,~We begin with the following lemmas.

\begin {lemma}${\label{2.1}}$ \begin{upshape} ([\cite{Berkani-Zariouh Extended}, Corollary 2.9])   \end{upshape}
An operator $T \in \mathcal{B}(X)$ possesses property $(gb)$ if and
only if $T$ satisfies generalized a-Browder's theorem and
$\Pi(T)=\Pi_{a}(T)$.
\end{lemma}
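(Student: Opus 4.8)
The plan is to reduce the equivalence to a single chain of inclusions,
\[
\Pi(T) \ \subseteq\ \Pi_{a}(T) \ \subseteq\ \sigma_{a}(T) \setminus \sigma_{SBF_{+}^{-}}(T),
\]
valid for \emph{every} $T \in \mathcal{B}(X)$, and then to compare this chain with the two defining equalities, that of property $(gb)$ and that of generalized a-Browder's theorem.

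First I would verify the left-hand inclusion $\Pi(T) \subseteq \Pi_{a}(T)$. If $\lambda \in \Pi(T)$, then $T - \lambda I$ is Drazin invertible, hence left Drazin invertible, so $\lambda \notin \sigma_{LD}(T)$; moreover $\lambda \in \sigma(T)$, and $\lambda$ must in fact lie in $\sigma_{a}(T)$, since otherwise $T - \lambda I$ would be bounded below, forcing $asc(T - \lambda I) = 0 = dsc(T - \lambda I)$ and hence the invertibility of $T - \lambda I$, a contradiction. Thus $\lambda \in \sigma_{a}(T) \setminus \sigma_{LD}(T) = \Pi_{a}(T)$. For the right-hand inclusion I would use the standard fact that a left Drazin invertible operator is upper semi-B-Weyl — its restriction $T_{n}$ to $\mathcal{R}(T^{n})$ is bounded below for a suitable $n$, so $\mathrm{ind}(T) = \mathrm{ind}(T_{n}) \leq 0$ — which gives $\sigma_{SBF_{+}^{-}}(T) \subseteq \sigma_{LD}(T)$ and therefore $\Pi_{a}(T) = \sigma_{a}(T) \setminus \sigma_{LD}(T) \subseteq \sigma_{a}(T) \setminus \sigma_{SBF_{+}^{-}}(T)$.

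With the chain established, both implications are immediate. If $T$ has property $(gb)$, then $\sigma_{a}(T) \setminus \sigma_{SBF_{+}^{-}}(T) = \Pi(T)$, so the two extreme terms of the chain coincide and consequently all three sets are equal; the equality $\Pi_{a}(T) = \sigma_{a}(T) \setminus \sigma_{SBF_{+}^{-}}(T)$ is precisely generalized a-Browder's theorem, and the equality $\Pi(T) = \Pi_{a}(T)$ is the other required conclusion. Conversely, if $T$ satisfies generalized a-Browder's theorem and $\Pi(T) = \Pi_{a}(T)$, then $\sigma_{a}(T) \setminus \sigma_{SBF_{+}^{-}}(T) = \Pi_{a}(T) = \Pi(T)$, which is exactly property $(gb)$.

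The argument is essentially formal once the chain of inclusions is in place, so there is no serious obstacle here; the only point that demands a little care — and which I would therefore regard as the substance of the proof — is the justification of that chain, namely that every pole is a left pole (in particular that $\Pi(T) \subseteq \sigma_{a}(T)$) and that the upper semi-B-Weyl spectrum is contained in the left Drazin spectrum. Both are well known from the theory of B-Fredholm operators, and once they are granted the equivalence follows by pure set manipulation of the three equalities involved.
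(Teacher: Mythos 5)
Your proposal is correct. Note that the paper itself gives no proof of this lemma --- it is quoted verbatim from [Berkani--Zariouh, Corollary~2.9] --- and your argument is essentially the standard one behind that corollary: establish the universally valid chain $\Pi(T) \subseteq \Pi_{a}(T) \subseteq \sigma_{a}(T)\setminus\sigma_{SBF_{+}^{-}}(T)$ and then read off both implications by comparing the two defining equalities. You correctly isolate the only two substantive inputs (every pole is a left pole, and left Drazin invertibility implies the upper semi-B-Weyl condition, so that $\sigma_{SBF_{+}^{-}}(T)\subseteq\sigma_{LD}(T)$), both of which are standard facts that the paper itself relies on elsewhere (e.g.\ the inclusion $\Pi(T)\subseteq\sigma_{a}(T)\setminus\sigma_{SBF_{+}^{-}}(T)$ invoked in the proof of Lemma~2.2).
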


\begin {lemma}${\label{2.2}}$ If the equality
$\sigma_{SBF_{+}^{-}}(T)=\sigma_{D}(T)$ holds for $T \in
\mathcal{B}(X)$, then $T$ possesses property $(gb)$.
\end{lemma}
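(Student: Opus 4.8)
The plan is to prove the inclusion $\sigma_{a}(T) \backslash \sigma_{SBF_{+}^{-}}(T) = \Pi(T)$ directly, using only the hypothesis together with the standard identity $\Pi(T)=\sigma(T) \backslash \sigma_{D}(T)$ recorded in the introduction. Since $\sigma_{SBF_{+}^{-}}(T)=\sigma_{D}(T)$ by assumption, the left-hand side equals $\sigma_{a}(T) \backslash \sigma_{D}(T)$, so everything reduces to checking that $\sigma_{a}(T) \backslash \sigma_{D}(T) = \sigma(T) \backslash \sigma_{D}(T)$.

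For the inclusion $\subseteq$ there is nothing to do: $\sigma_{a}(T) \subseteq \sigma(T)$ always, hence $\sigma_{a}(T) \backslash \sigma_{D}(T) \subseteq \sigma(T) \backslash \sigma_{D}(T)$. For the reverse inclusion, I would take $\lambda \in \sigma(T) \backslash \sigma_{D}(T) = \Pi(T)$, so that $T-\lambda I$ is Drazin invertible, i.e. $asc(T-\lambda I)=dsc(T-\lambda I) < \infty$, and $\lambda \in \sigma(T)$ forces this common value to be at least $1$; consequently $\mathcal{N}(T-\lambda I) \neq \{0\}$, so $\lambda$ is an eigenvalue and in particular $\lambda \in \sigma_{a}(T)$. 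Since also $\lambda \notin \sigma_{D}(T)=\sigma_{SBF_{+}^{-}}(T)$, we get $\lambda \in \sigma_{a}(T) \backslash \sigma_{SBF_{+}^{-}}(T)$, which finishes the argument.

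There is essentially no serious obstacle here; the lemma is a bookkeeping consequence of the definitions, and the only step meriting a moment's care is the observation that a pole of the resolvent is necessarily an eigenvalue, hence lies in $\sigma_{a}(T)$. As an alternative route one could instead invoke Lemma \ref{2.1}: the hypothesis $\sigma_{SBF_{+}^{-}}(T)=\sigma_{D}(T)$ squeezes the chain $\sigma_{SBF_{+}^{-}}(T) \subseteq \sigma_{LD}(T) \subseteq \sigma_{D}(T)$ into equalities, which yields $\sigma_{a}(T)\backslash\sigma_{SBF_{+}^{-}}(T)=\sigma_{a}(T)\backslash\sigma_{LD}(T)=\Pi_{a}(T)$ (generalized a-Browder's theorem), while $\sigma_{LD}(T)=\sigma_{D}(T)$ together with the fact that poles are eigenvalues gives $\Pi_{a}(T)=\sigma_{a}(T)\backslash\sigma_{D}(T)=\sigma(T)\backslash\sigma_{D}(T)=\Pi(T)$; Lemma \ref{2.1} then delivers property $(gb)$. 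I would present the first, self-contained version as the proof.
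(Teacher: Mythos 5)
Your proof is correct and follows essentially the same route as the paper: rewrite $\sigma_{a}(T)\backslash\sigma_{SBF_{+}^{-}}(T)$ as $\sigma_{a}(T)\backslash\sigma_{D}(T)\subseteq\Pi(T)$ for one inclusion, and use the fact that a pole is an eigenvalue (hence lies in $\sigma_{a}(T)$ and outside $\sigma_{SBF_{+}^{-}}(T)$) for the reverse. The paper simply states the reverse inclusion $\Pi(T)\subseteq\sigma_{a}(T)\backslash\sigma_{SBF_{+}^{-}}(T)$ as ``always true,'' whereas you spell out the justification; the substance is identical.
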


\begin{proof} Suppose that $\sigma_{SBF_{+}^{-}}(T)=\sigma_{D}(T)$.
If $\lambda \in  \sigma_{a}(T) \backslash \sigma_{SBF_{+}^{-}}(T)$,
then $\lambda \in \sigma_{a}(T) \backslash \sigma_{D}(T)$ $
\subseteq \Pi(T)$. This implies that $\sigma_{a}(T) \backslash
\sigma_{SBF_{+}^{-}}(T) = \Pi(T)$. Since $\Pi(T) \subseteq
\sigma_{a}(T) \backslash \sigma_{SBF_{+}^{-}}(T)$ is always true,
$\sigma_{a}(T) \backslash \sigma_{SBF_{+}^{-}}(T) = \Pi(T)$, i.e.
$T$ possesses property $(gb)$.
\end{proof}

\begin {lemma}${\label{2.3}}$ If $T \in
\mathcal{B}(X)$, then $\sigma_{SBF_{+}^{-}}(T) \cup S(T^*) =
\sigma_{D}(T)$.
\end{lemma}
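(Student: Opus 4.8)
The plan is to fix $\lambda \in \mathbb{C}$, write $S := T - \lambda I$, and prove the asserted set equality pointwise: namely, that $S$ is Drazin invertible if and only if $S$ is upper semi-B-Weyl and $T^{*}$ has SVEP at $\lambda$. The ``only if'' part is the inclusion $\sigma_{SBF_{+}^{-}}(T) \cup S(T^{*}) \subseteq \sigma_{D}(T)$ and the ``if'' part is the reverse inclusion $\sigma_{D}(T) \subseteq \sigma_{SBF_{+}^{-}}(T) \cup S(T^{*})$.

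For the ``only if'' direction I would use the structure of a Drazin invertible operator: $X = M \oplus N$ with $M, N$ closed and $S$-invariant, $S|_{M}$ invertible and $S|_{N}$ nilpotent, say $(S|_{N})^{k} = 0$. Then $\mathcal{R}(S^{k}) = M$ is closed and $S_{k} = S|_{M}$ is invertible, so $S$ is upper semi-B-Fredholm of index $0$, hence upper semi-B-Weyl, which gives $\lambda \notin \sigma_{SBF_{+}^{-}}(T)$. Also $asc(S) = dsc(S) < \infty$, whence $asc(S^{*}) < \infty$; since finite ascent implies SVEP, $T^{*}$ has SVEP at $\lambda$ and $\lambda \notin S(T^{*})$.

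For the ``if'' direction, suppose $S$ is upper semi-B-Weyl and $T^{*}$ has SVEP at $\lambda$. The first (and main) step: an upper semi-B-Fredholm operator is quasi-Fredholm and hence has topological uniform descent for all large $n$, so the standard equivalence from local spectral theory --- for an operator with topological uniform descent, SVEP of the adjoint at $\lambda$ is equivalent to $dsc(S) < \infty$ --- yields $dsc(S) < \infty$. The second step is index bookkeeping: by [\cite{Berkani semi-B-fredholm}, Proposition 2.1] there is $n_{0}$ with $\mathcal{R}(S^{m})$ closed, $S_{m}$ upper semi-Fredholm and $\mathrm{ind}(S_{m}) = \mathrm{ind}(S) \leq 0$ for all $m \geq n_{0}$; choosing $m \geq \max\{n_{0}, dsc(S)\}$, the equality $\mathcal{R}(S^{m+1}) = \mathcal{R}(S^{m})$ makes $S_{m}$ onto, hence Fredholm with $\beta(S_{m}) = 0$ and $\mathrm{ind}(S_{m}) = \alpha(S_{m}) \geq 0$; together with $\mathrm{ind}(S_{m}) \leq 0$ this forces $\alpha(S_{m}) = 0$, i.e. $\mathcal{N}(S) \cap \mathcal{R}(S^{m}) = \{0\}$. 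The third step is immediate: if $x \in \mathcal{N}(S^{m+1})$ then $S^{m}x \in \mathcal{N}(S) \cap \mathcal{R}(S^{m}) = \{0\}$, so $x \in \mathcal{N}(S^{m})$ and $asc(S) \leq m$. With $asc(S)$ and $dsc(S)$ both finite they are equal, so $S$ is Drazin invertible and $\lambda \notin \sigma_{D}(T)$.

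I expect the main obstacle to be the first step of the ``if'' direction: justifying the passage from ``$S$ upper semi-B-Fredholm together with SVEP of $T^{*}$ at $\lambda$'' to ``$dsc(S) < \infty$''. This is where the topological-uniform-descent machinery (and the accompanying SVEP/descent equivalence, in the spirit of Grabiner and of Aiena's treatment) genuinely enters; the remaining steps are routine bookkeeping with the kernels, ranges and indices of the restrictions $S_{m}$, using only the already-cited consistency of the B-Fredholm index.
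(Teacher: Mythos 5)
Your proposal is correct and follows essentially the same route as the paper: the decisive step in both is that an upper semi-B-Fredholm operator is quasi-Fredholm (hence of eventual topological uniform descent), so SVEP of $T^{*}$ at $\lambda$ forces $dsc(T-\lambda)<\infty$, after which the sign of the index pins down $\alpha\bigl((T-\lambda)_{m}\bigr)=0$ and hence finite ascent. The only differences are cosmetic: you carry out the index and ascent bookkeeping by hand where the paper cites [\cite{Aiena}, Theorem 3.4] and [\cite{Aiena-Biondi-Carpintero}, Theorem 2.9], and you spell out the reverse inclusion that the paper dismisses as obvious.
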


\begin{proof} Let $\lambda \notin \sigma_{SBF_{+}^{-}}(T) \cup
S(T^*)$. Then $T - \lambda$ is a upper semi-Weyl operator and $T^*$
has SVEP at $\lambda$. Thus $T - \lambda$ is a upper semi-B-Fredholm
operator and $\makebox{ind}(T - \lambda) \leq 0$. Hence there exists
$n \in \mathbb{N}$ such that $\mathcal {R}((T-\lambda)^{n})$ is
closed, $(T - \lambda)_{n}$ is a upper semi-Fredholm operator and
$\makebox{ind}(T - \lambda)_{n} \leq 0$. By
[\cite{Aiena2007quasifredholm}, Theorem 2.11], $dsc(T - \lambda) <
\infty$. Thus $dsc(T - \lambda)_{n} < \infty$, by [\cite{Aiena},
Theorem 3.4(ii)], $\makebox{ind}(T - \lambda)_{n} \geq 0$. By
[\cite{Aiena}, Theorem 3.4(iv)], $asc(T - \lambda)_{n} = dsc(T -
\lambda)_{n} < \infty$. Consequently, $(T - \lambda)_{n}$ is a
Browder operator. Thus by [\cite{Aiena-Biondi-Carpintero}, Theorem
2.9] we then conclude that $T - \lambda$ is Drazin invertible, i.e.
$\lambda \notin \sigma_{D}(T)$. Hence $\sigma_{D}(T) \subseteq
\sigma_{SBF_{+}^{-}}(T) \cup S(T^*)$. Since the reverse inclusion
obviously holds, we get $\sigma_{SBF_{+}^{-}}(T) \cup S(T^*) =
\sigma_{D}(T)$.
\end{proof}

\begin {theorem}${\label{2.4}}$ If $T \in
\mathcal{B}(X)$ satisfies $S(T^*) \subseteq
\sigma_{SBF_{+}^{-}}(T)$, then $T$ possesses property $(gb)$. In
particular, if $T^*$ has SVEP, then $T$ possesses property $(gb)$.
\end{theorem}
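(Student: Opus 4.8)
The plan is to combine Lemmas \ref{2.2} and \ref{2.3} directly; essentially no new work is needed. First I would observe that Lemma \ref{2.3} gives the identity $\sigma_{SBF_{+}^{-}}(T) \cup S(T^*) = \sigma_{D}(T)$ unconditionally. Under the hypothesis $S(T^*) \subseteq \sigma_{SBF_{+}^{-}}(T)$, the union on the left collapses: $\sigma_{SBF_{+}^{-}}(T) \cup S(T^*) = \sigma_{SBF_{+}^{-}}(T)$, so the identity reduces to $\sigma_{SBF_{+}^{-}}(T) = \sigma_{D}(T)$.

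Having established the spectral equality $\sigma_{SBF_{+}^{-}}(T) = \sigma_{D}(T)$, I would invoke Lemma \ref{2.2}, which says precisely that this equality forces $T$ to possess property $(gb)$. That completes the first assertion.

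For the "in particular" clause, I would note that if $T^*$ has SVEP then $S(T^*) = \varnothing$, and the empty set is trivially contained in $\sigma_{SBF_{+}^{-}}(T)$; hence the hypothesis of the first part is satisfied and $T$ possesses property $(gb)$.

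There is no real obstacle here — the theorem is a corollary packaging of the two preceding lemmas, and the only thing to be careful about is citing them in the right order (Lemma \ref{2.3} to get the spectral identity, then Lemma \ref{2.2} to conclude).
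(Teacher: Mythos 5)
Your proposal is correct and follows exactly the same route as the paper: Lemma \ref{2.3} collapses to $\sigma_{SBF_{+}^{-}}(T)=\sigma_{D}(T)$ under the hypothesis, and Lemma \ref{2.2} then yields property $(gb)$, with the SVEP case handled via $S(T^*)=\varnothing$. Nothing is missing.
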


\begin{proof} Suppose that $S(T^*) \subseteq
\sigma_{SBF_{+}^{-}}(T)$. Then by Lemma \ref{2.3}, we get
$\sigma_{SBF_{+}^{-}}(T) = \sigma_{D}(T)$. Consequently, by Lemma
\ref{2.2}, $T$ possesses property $(gb)$. If $T^*$ has SVEP, then
$S(T^*)=\varnothing$, the conclusion follows immediately.
\end{proof}

The following example shows that the converse of Theorem \ref{2.4}
is not true.

\begin {example}${\label{2.5}}$ \begin{upshape}
Let $X$ be the Hilbert space $l_{2}(\mathbb{N})$ and let $T:
l_{2}(\mathbb{N}) \longrightarrow l_{2}(\mathbb{N})$ be the
unilateral right shift operator defined by
$$T(x_{1},x_{2},\cdots )=(0,x_{1},x_{2}, \cdots ) \makebox{\ \ \  for all \ } (x_{n}) \in
l_{2}(\mathbb{N}).$$ Then,
$$\sigma_{a}(T) =  \{\lambda \in
\mathbb{C}: |\lambda| = 1 \},$$
$$\sigma_{SBF_{+}^{-}}(T) =  \{\lambda \in
\mathbb{C}: |\lambda| = 1 \}$$ and $$\Pi(T) = \varnothing.$$ Hence
$\sigma_{a}(T) \backslash \sigma_{SBF_{+}^{-}}(T) = \Pi(T)$, i.e.
$T$ possesses property $(gb)$.

But $S(T^*)= \{\lambda \in \mathbb{C}: 0 \leq |\lambda| < 1 \}
\nsubseteq \{\lambda \in \mathbb{C}: |\lambda| = 1 \} =
\sigma_{SBF_{+}^{-}}(T).$

  \end{upshape}
\end{example}

The next theorem had been established in [\cite{Rashid}, Theorem
3.10], but its proof was not so clear. Hence we give a revised proof
of it.

\begin {theorem}${\label{2.6}}$ If $T \in \mathcal{B}(X)$ satisfies property
$(gb)$ and $N$ is a nilpotent operator that commutes with $T$, then
$T+N$ satisfies property $(gb)$.
\end{theorem}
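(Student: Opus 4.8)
The plan is to use Lemma \ref{2.1}, which reduces property $(gb)$ to the conjunction of generalized a-Browder's theorem with the equality $\Pi(T)=\Pi_a(T)$. Since $N$ is nilpotent it is in particular a quasi-nilpotent (hence Riesz) operator commuting with $T$, so by the discussion following formulas (\ref{eq 1.1})--(\ref{eq 1.4}) generalized a-Browder's theorem transfers from $T$ to $T+N$. Thus the whole problem collapses to showing that $\Pi(T)=\Pi_a(T)$ is preserved under commuting nilpotent perturbations, i.e.\ that $\Pi(T+N)=\Pi_a(T+N)$ whenever $\Pi(T)=\Pi_a(T)$.

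To handle that, first I would record the well-known fact that nilpotent perturbations commuting with $T$ leave the relevant spectra invariant: $\sigma(T+N)=\sigma(T)$, $\sigma_a(T+N)=\sigma_a(T)$, $\sigma_D(T+N)=\sigma_D(T)$ and $\sigma_{LD}(T+N)=\sigma_{LD}(T)$. The first two are classical; the last two follow because Drazin invertibility and left Drazin invertibility are preserved under commuting nilpotent perturbations (this can be cited from Aiena's monograph, or deduced from the fact that $T+N$ is Drazin invertible iff $T$ is, since $(T+N)$ differs from $T$ by a nilpotent commuting operator). Granting these four equalities, we immediately get $\Pi(T+N)=\sigma(T+N)\setminus\sigma_D(T+N)=\sigma(T)\setminus\sigma_D(T)=\Pi(T)$ and likewise $\Pi_a(T+N)=\Pi_a(T)$, so $\Pi(T+N)=\Pi(T)=\Pi_a(T)=\Pi_a(T+N)$.

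Putting the two halves together: $T+N$ satisfies generalized a-Browder's theorem and $\Pi(T+N)=\Pi_a(T+N)$, hence by Lemma \ref{2.1} $T+N$ possesses property $(gb)$.

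The main obstacle, and the point where the ``revised'' proof presumably tightens the argument of [\cite{Rashid}], is justifying $\sigma_D(T+N)=\sigma_D(T)$ and $\sigma_{LD}(T+N)=\sigma_{LD}(T)$ cleanly. One wants to avoid circular or hand-wavy reasoning here; the safe route is to invoke the stability of (left) Drazin invertibility under commuting nilpotent perturbations as an already-established result, or alternatively to use the eventual-topological-uniform-descent machinery advertised in the key words, which controls ascent, descent and closedness of iterated ranges simultaneously and behaves well under commuting nilpotent perturbations. Everything else in the argument is bookkeeping with the characterizations $\Pi(T)=\sigma(T)\setminus\sigma_D(T)$ and $\Pi_a(T)=\sigma_a(T)\setminus\sigma_{LD}(T)$ already recalled in the Introduction.
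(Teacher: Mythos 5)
Your proposal is correct and follows essentially the same route as the paper: reduce via Lemma \ref{2.1} to generalized a-Browder's theorem plus $\Pi=\Pi_a$, transfer the former by the Riesz-perturbation stability noted in the Introduction, and transfer the latter via the invariance of $\sigma$, $\sigma_a$, $\sigma_D$ and $\sigma_{LD}$ under commuting nilpotent perturbations. The paper settles the point you flag as the main obstacle by citing [\cite{Kaashoek-Lay}, Theorem 2.2] for the Drazin spectrum and [\cite{Bel-Burgos-Oudghiri 3}, Theorem 3.2] for the left Drazin spectrum, exactly the kind of established stability results you propose to invoke.
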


\begin{proof} Suppose that $T \in \mathcal{B}(X)$ satisfies property
$(gb)$ and $N$ is a nilpotent operator that commutes with $T$. By
Lemma \ref{2.1}, $T$ satisfies generalized a-Browder's theorem and
$\Pi(T)=\Pi_{a}(T)$. Hence $T+N$ satisfies generalized a-Browder's
theorem. By [\cite{Mbekhta-Muller 9}], $\sigma(T+N) = \sigma(T)$ and
$\sigma_{a}(T+N) = \sigma_{a}(T)$. Hence, by [\cite{Kaashoek-Lay},
Theorem 2.2] and [\cite{Bel-Burgos-Oudghiri 3}, Theorem 3.2], we
have that $\Pi(T+N)=\sigma(T+N) \backslash \sigma_{D}(T+N) =
\sigma(T) \backslash \sigma_{D}(T)=\Pi(T)=\Pi_{a}(T) = \sigma_{a}(T)
\backslash \sigma_{LD}(T)=\sigma_{a}(T+N) \backslash
\sigma_{LD}(T+N) = \Pi_{a}(T+N)$. By Lemma \ref{2.1} again, $T+N$
satisfies property $(gb)$.
\end{proof}

The following example, which is a revised version of [\cite{Rashid},
Example 3.11], shows that the hypothesis of commutativity in Theorem
\ref{2.6} is crucial.

\begin {example}${\label{2.7}}$ \begin{upshape}
Let $T: l_{2}(\mathbb{N}) \longrightarrow l_{2}(\mathbb{N})$ be the
unilateral right shift operator defined by
$$T(x_{1},x_{2},\cdots )=(0,x_{1},x_{2}, \cdots ) \makebox{\ \ \  for all \ } (x_{n}) \in
l_{2}(\mathbb{N}).$$ Let $N: l_{2}(\mathbb{N}) \longrightarrow
l_{2}(\mathbb{N})$ be a nilpotent operator with rank one defined by
$$N(x_{1},x_{2},\cdots )=(0,-x_{1},0, \cdots ) \makebox{\ \ \  for all \ } (x_{n}) \in
l_{2}(\mathbb{N}).$$ Then $TN \neq NT$. Moreover,
$$\sigma(T) = \{\lambda \in
\mathbb{C}: 0 \leq |\lambda| \leq 1 \},$$
$$\sigma_{a}(T) = \{\lambda \in
\mathbb{C}: |\lambda| = 1 \} ,$$
$$\sigma(T+N) =  \{\lambda \in \mathbb{C}: 0 \leq
|\lambda| \leq 1 \}$$ and
$$\sigma_{a}(T+N) = \{\lambda \in
\mathbb{C}: |\lambda| = 1 \} \cup \{ 0 \}.$$ It follows that
$\Pi_{a}(T)=\Pi(T)= \varnothing$ and $\{0 \}= \Pi_{a}(T+N) \neq
\Pi(T+N)= \varnothing.$ Hence by Lemma \ref{2.1}, $T+N$ does not
satisfy property $(gb)$. But since $T$ has SVEP, $T$ satisfies
a-Browder's theorem or equivalently, by [\cite{AmouchM ZguittiH
equivalence}, Theorem 2.2], $T$ satisfies generalized a-Browder's
theorem. Therefore by Lemma \ref{2.1} again, $T$ satisfies property
$(gb)$.

  \end{upshape}
\end{example}

To continue the discussion of this note, we recall some classical
definitions. Using the isomorphism $X/\mathcal {N}(T^{d}) \approx
\mathcal {R}(T^{d})$ and
    following [\cite{Grabiner 9}], a topology on $\mathcal {R}(T^{d})$ is
    defined as follows.

\begin{definition} ${\label{2.8}}$ \begin{upshape} Let $T \in \mathcal{B}(X)$. For every $d \in \mathbf{\mathbb{N}},$
the operator range topological on $\mathcal {R}(T^{d})$ is defined
by the norm $||\small{\cdot}||_{\mathcal {R}(T^{d})}$ such that for
all $y \in \mathcal {R}(T^{d})$,
$$||y||_{\mathcal {R}(T^{d})} := \inf\{||x||: x \in X,y=T^{d}x\}.$$
  \end{upshape}
\end{definition}
 For a detailed discussion of operator ranges and their topologies,
 we refer the reader to [\cite{Fillmore-Williams 7}] and [\cite{Grabiner 8}].

\begin {definition}${\label{2.9}}$ \begin{upshape} Let $T \in
    \mathcal{B}(X)$ and let $d \in \mathbf{\mathbb{N}}$. Then $T$ has
    $uniform$ $descent$ for $n \geq d$ if $k_{n}(T)=0$ for all $n \geq d$. If in addition $\mathcal {R}(T^{n})$
    is closed in the operator range topology of $\mathcal {R}(T^{d})$ for all $n \geq d$$,$ then we say that $T$
    has $eventual$ $topological$ $uniform$
    $descent$$,$ and$,$ more precisely$,$ that $T$ has $topological$ $uniform$
    $descent$ $for$ $n \geq d$.
    \end{upshape}

\end{definition}

Operators with eventual topological uniform descent are introduced
by Grabiner in [\cite{Grabiner 9}]. It includes many classes of
operators introduced in the Introduction of this note, such as upper
semi-B-Fredholm operators, left Drazin invertible operators, Drazin
invertible operators, and so on. It also includes many other classes
of operators such as operators of Kato type, quasi-Fredholm
operators, operators with finite descent and operators with finite
essential descent, and so on. A very detailed and far-reaching
account of these notations can be seen in
[\cite{Aiena},\cite{Berkani},\cite{Mbekhta-Muller 9}]. Especially,
operators which have topological uniform descent for $n \geq 0$ are
precisely the $semi$-$regular$ operators studied by Mbekhta in
[\cite{Mbekhta}]. Discussions of operators with eventual topological
uniform descent may be found in
[\cite{Berkani-Castro-Djordjevic},\cite{Cao},\cite{Grabiner
9},\cite{Jiang-Zhong-Zeng},\cite{Zeng-Zhong-Wu}].

\begin {lemma}${\label{2.10}}$ If $T \in \mathcal{B}(X)$ and $F$ is a finite rank operator commuting with
$T$, then

 $(1)$ $\sigma_{SBF_{+}^{-}}(T+F) =\sigma_{SBF_{+}^{-}}(T)$;

 $(2)$ $\sigma_{D}(T+F) =\sigma_{D}(T)$.

\end{lemma}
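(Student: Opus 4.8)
The plan is to derive both equalities from the one-operator statements ``if $T$ is upper semi-B-Weyl then so is $T+F$'' and ``if $T$ is Drazin invertible then so is $T+F$''. Applying each to $T-\lambda I$ for every $\lambda\in\mathbb{C}$ yields one inclusion in the corresponding equality, and applying it again with $T+F$ in place of $T$ and $-F$ in place of $F$ (which is finite rank and commutes with $T+F$) yields the reverse inclusion. A finite rank operator is Riesz, so formulas (\ref{eq 1.1})--(\ref{eq 1.4}) are available throughout; but $\sigma_{SBF_{+}^{-}}$ and $\sigma_{D}$ are not among the spectra listed there, and the counterexamples given later in the note show that commuting Riesz, indeed commuting quasi-nilpotent, perturbations need not preserve either. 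So the argument must use finiteness of the rank and not merely Rieszness.

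For $(2)$, suppose $T$ is Drazin invertible and put $p=asc(T)=dsc(T)$; then $X=\mathcal{R}(T^{p})\oplus\mathcal{N}(T^{p})$, with $T$ bijective on $\mathcal{R}(T^{p})$ and $T^{p}$ vanishing on $\mathcal{N}(T^{p})$. Since $F$ commutes with $T^{p}$ it leaves both summands invariant, so $T+F$ splits correspondingly. On $\mathcal{R}(T^{p})$ one has $T+F=T(I+T^{-1}F)$ with $T^{-1}F$ of finite rank, and $I+K$ is Drazin invertible for every finite rank operator $K$ (its spectrum is finite and it has only poles of the resolvent); as multiplication by the bijective $T$ preserves finiteness of ascent and descent, $(T+F)|_{\mathcal{R}(T^{p})}$ is Drazin invertible. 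On $\mathcal{N}(T^{p})$, the commutation $TF=FT$ together with the nilpotence of $T$ there forces, via the binomial expansion, every power $\geq p$ of $(T+F)|_{\mathcal{N}(T^{p})}$ to be of finite rank, so this restriction has finite ascent and descent and is again Drazin invertible. A direct sum of Drazin invertible operators being Drazin invertible, $T+F$ is Drazin invertible. (Alternatively one invokes the known invariance of the ascent and descent spectra under commuting finite rank perturbations, together with the identity $\sigma_{D}(T)=\{\lambda:asc(T-\lambda I)=\infty\}\cup\{\lambda:dsc(T-\lambda I)=\infty\}$.)

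For $(1)$, the index is best dealt with first. If $T$ is upper semi-B-Fredholm, then $T-\mu I$ is upper semi-Fredholm on a punctured disc about $0$ with index equal to the index of $T$ as an upper semi-B-Fredholm operator, and, $F$ being Riesz, $(T-\mu I)+F$ is also upper semi-Fredholm there with the same index; hence once $T+F$ is known to be upper semi-B-Fredholm, its index equals that of $T$, which is $\leq 0$ by the hypothesis of $(1)$, so $T+F$ is upper semi-B-Weyl. Thus $(1)$ reduces to showing that a commuting finite rank perturbation preserves the property of being upper semi-B-Fredholm, and this is the main obstacle. The route I would take is to express ``$T$ is upper semi-B-Fredholm'' as ``$T$ has topological uniform descent for $n\geq d$ for some $d$, and $\dim(\mathcal{N}(T)\cap\mathcal{R}(T^{d}))<\infty$'', to invoke a Grabiner-type perturbation theorem asserting that $T+F$ then again has eventual topological uniform descent, and to verify that the analogous nullity for $T+F$ differs from that of $T$ by at most a finite amount, whence $T+F$ is again upper semi-B-Fredholm. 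Alternatively one cites directly the known stability of $\sigma_{SBF_{+}^{-}}$ under commuting finite rank perturbations. The delicate point is precisely this structural step, where finiteness of the rank is genuinely needed: for commuting Riesz perturbations it fails, as $T=0$ versus $T+Q=Q$ with $Q$ the Volterra operator already shows.
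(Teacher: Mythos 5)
Your proposal is correct, and its skeleton --- reduction to $\lambda=0$, symmetry via the commuting finite rank operator $-F$, and separate treatment of the B-Fredholm property and of the index --- coincides with the paper's; the difference is that you fill in, with varying completeness, two steps that the paper handles purely by citation. For part $(2)$ the paper merely notes that Drazin invertibility is equivalent to finite ascent plus finite descent and quotes Kaashoek--Lay; your direct argument via the decomposition $X=\mathcal{R}(T^{p})\oplus\mathcal{N}(T^{p})$ is sound (the factorization $T+F=T(I+T^{-1}F)$ on the first summand, the fact that $I+K$ is Browder for finite rank $K$ since all kernels $\mathcal{N}((I+K)^{n})$ sit inside the fixed finite-dimensional space $\mathcal{R}(K)$, and the observation that powers $\geq p$ of $(T+F)|_{\mathcal{N}(T^{p})}$ are finite rank all check out) and is more self-contained at the cost of length. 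For part $(1)$ the paper cites [Berkani, Theorem 3.6] with [Bel Hadj Fredj--Burgos--Oudghiri, Theorem 3.2] for preservation of upper semi-B-Fredholmness and [Grabiner, Theorem 5.8] for equality of indices; your punctured-disc argument for the index is a clean substitute for the latter citation, while for the former you correctly isolate it as the crux and, exactly like the paper, ultimately fall back on citing the known stability result --- your sketched alternative via eventual topological uniform descent would still need the precise Banach-space characterization of upper semi-B-Fredholm operators (including the relevant closedness conditions) to be made rigorous, so the citation is doing real work there in both your write-up and the paper's. Your side remark that Rieszness alone cannot suffice, witnessed by the Volterra example, is a correct and useful sanity check that the paper only makes later.
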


\begin{proof}
 $(1)$ Without loss of generality, we need only to show that
 $0 \notin \sigma_{SBF_{+}^{-}}(T+F)$ if and only if  $0 \notin
 \sigma_{SBF_{+}^{-}}(T)$. By symmetry,
 it suffices to prove that $0 \notin \sigma_{SBF_{+}^{-}}(T+F)$
 if $0 \notin \sigma_{SBF_{+}^{-}}(T)$.

 Suppose that $0 \notin \sigma_{SBF_{+}^{-}}(T)$. Then $T$ is a upper
 semi-B-Fredholm operator and $\makebox{ind}(T) \leq 0$. Hence it
 follows from [\cite{Berkani}, Theorem 3.6] and [\cite{Bel-Burgos-Oudghiri 3}, Theorem
 3.2] that $T+F$ is also a upper semi-B-Fredholm operator. Thus by [\cite{Grabiner 9}, Theorem
 5.8], $\makebox{ind}(T+F)=\makebox{ind}(T) \leq 0$. Consequently, $T+F$ is a semi-B-Weyl operator, i.e.
 $0 \notin \sigma_{SBF_{+}^{-}}(T)$, and this completes the proof of $(1)$.

 $(2)$ Noting that an operator is Drazin invertible if and only if it is of finite ascent and finite descent,
  the conclusion follows from [\cite{Kaashoek-Lay}, Theorem 2.2].
\end{proof}

\begin {theorem}${\label{2.11}}$ If $T \in \mathcal{B}(X)$ satisfies $S(T^*) \subseteq
\sigma_{SBF_{+}^{-}}(T)$ and $F$ is a finite rank operator commuting
with $T$, then $T+F$ satisfies property $(gb)$.

\end{theorem}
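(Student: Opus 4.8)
The plan is to reduce everything to the spectral identity $\sigma_{SBF_{+}^{-}} = \sigma_{D}$ and then to invoke the finite rank perturbation lemma already available. First I would observe that the hypothesis $S(T^*) \subseteq \sigma_{SBF_{+}^{-}}(T)$, inserted into Lemma~\ref{2.3}, yields
$$\sigma_{SBF_{+}^{-}}(T) = \sigma_{SBF_{+}^{-}}(T) \cup S(T^*) = \sigma_{D}(T).$$
This is precisely the situation of Lemma~\ref{2.2}, and it re-derives that $T$ itself has property $(gb)$ (cf. Theorem~\ref{2.4}); but the point is that it isolates the one equality that will survive the perturbation.

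Next I would apply Lemma~\ref{2.10} to the commuting finite rank operator $F$: part $(1)$ gives $\sigma_{SBF_{+}^{-}}(T+F) = \sigma_{SBF_{+}^{-}}(T)$ and part $(2)$ gives $\sigma_{D}(T+F) = \sigma_{D}(T)$. Chaining these with the identity from the previous step,
$$\sigma_{SBF_{+}^{-}}(T+F) = \sigma_{SBF_{+}^{-}}(T) = \sigma_{D}(T) = \sigma_{D}(T+F).$$
Finally, Lemma~\ref{2.2} applied to $T+F$ shows that $T+F$ possesses property $(gb)$, which completes the argument.

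The step to be careful about is that one should \emph{not} try to verify the hypothesis directly for $T+F$, i.e. $S((T+F)^*) \subseteq \sigma_{SBF_{+}^{-}}(T+F)$: SVEP of the dual is not in general stable under commuting finite rank perturbations, so Theorem~\ref{2.4} cannot be applied to $T+F$ as a black box. The whole content is therefore the observation that the hypothesis is only used to produce the equality $\sigma_{SBF_{+}^{-}}(T) = \sigma_{D}(T)$, and that this equality (unlike SVEP of the dual) is transported to $T+F$ by Lemma~\ref{2.10}. Once this is noticed there is no genuine obstacle left; all the analytic work has already been absorbed into Lemmas~\ref{2.3} and~\ref{2.10}.
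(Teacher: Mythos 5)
Your argument is correct and is essentially identical to the paper's own proof: both derive $\sigma_{SBF_{+}^{-}}(T)=\sigma_{D}(T)$ from Lemma~\ref{2.3}, transport both spectra across the perturbation via Lemma~\ref{2.10}, and conclude with Lemma~\ref{2.2} applied to $T+F$. Your closing remark about why one should not attempt to verify $S((T+F)^*)\subseteq\sigma_{SBF_{+}^{-}}(T+F)$ directly is a sensible observation but not needed for the proof.
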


\begin{proof} Since $F$ is a finite rank operator commuting with
$T$, by Lemma \ref{2.10}, $\sigma_{SBF_{+}^{-}}(T+F)
=\sigma_{SBF_{+}^{-}}(T)$ and $\sigma_{D}(T+F) =\sigma_{D}(T)$.
Since $S(T^*) \subseteq \sigma_{SBF_{+}^{-}}(T)$, by Lemma
\ref{2.3}, $\sigma_{SBF_{+}^{-}}(T)=\sigma_{D}(T).$ Thus,
$\sigma_{SBF_{+}^{-}}(T+F)=\sigma_{D}(T+F)$. By Lemma \ref{2.2},
$T+F$ satisfies property $(gb)$.
\end{proof}

The following example illustrates that property $(gb)$ in general is
not preserved under commuting finite rank perturbations.

\begin {example}${\label{2.12}}$ \begin{upshape}
Let $U: l_{2}(\mathbb{N}) \longrightarrow l_{2}(\mathbb{N})$ be the
unilateral right shift operator defined by
$$U(x_{1},x_{2},\cdots )=(0,x_{1},x_{2}, \cdots ) \makebox{\ \ \  for all \ } (x_{n}) \in
l_{2}(\mathbb{N}).$$ For fixed $0 < \varepsilon < 1$, let
$F_{\varepsilon}: l_{2}(\mathbb{N}) \longrightarrow
l_{2}(\mathbb{N})$ be a finite rank operator defined by
$$F_{\varepsilon}(x_{1},x_{2},\cdots )=(-\varepsilon x_{1},0,0, \cdots ) \makebox{\ \ \  for all \ } (x_{n}) \in
l_{2}(\mathbb{N}).$$ We consider the operators $T$ and $F$ defined
by $T=U \oplus I$ and $F=0 \oplus F_{\varepsilon}$, respectively.
Then $F$ is a finite rank operator and $TF=FT$. Moreover,
$$\sigma(T) = \sigma(U) \cup \sigma(I) = \{\lambda \in
\mathbb{C}: 0 \leq |\lambda| \leq 1 \},$$
$$\sigma_{a}(T) = \sigma_{a}(U) \cup \sigma_{a}(I) = \{\lambda \in
\mathbb{C}: |\lambda| = 1 \} ,$$
$$\sigma(T+F) = \sigma(U) \cup
\sigma(I+F_{\varepsilon}) = \{\lambda \in \mathbb{C}: 0 \leq
|\lambda| \leq 1 \}$$ and
$$\sigma_{a}(T+F) = \sigma_{a}(U) \cup \sigma_{a}(I+F_{\varepsilon}) = \{\lambda \in
\mathbb{C}: |\lambda| = 1 \} \cup \{ 1 - \varepsilon \}.$$ It
follows that $\Pi_{a}(T)=\Pi(T)= \varnothing$ and $\{1 - \varepsilon
\}= \Pi_{a}(T+F) \neq \Pi(T+F)= \varnothing.$ Hence by Lemma
\ref{2.1}, $T+F$ does not satisfy property $(gb)$. But since $T$ has
SVEP, $T$ satisfies a-Browder's theorem or equivalently, by
[\cite{AmouchM ZguittiH equivalence}, Theorem 2.2], $T$ satisfies
generalized a-Browder's theorem. Therefore by Lemma \ref{2.1} again,
$T$ satisfies property $(gb)$.

  \end{upshape}
\end{example}

Rashid gives in [\cite{Rashid}, Theorem 3.15] that if $T \in
\mathcal{B}(X)$ and $Q$ is a quasi-nilpotent operator that commute
with $T$, then
$$\sigma_{SBF_{+}^{-}}(T+Q)=\sigma_{SBF_{+}^{-}}(T).$$
  The next example show that this equality
does not hold in general.

\begin {example}${\label{2.13}}$ \begin{upshape} Let $Q$ denote the Volterra operator on the Banach space
$C[0,1]$ defined by
$$(Qf)(t) = \int_{0}^{t}f(s)\, \mathrm{d}s \makebox{\ \ \  for all } f \in
C[0,1] \makebox{\ and } t \in [0,1]. $$ $Q$ is injective and
quasi-nilpotent. Hence it is easy to see that $\mathcal {R}(Q^{n})$
is not closed for every $n \in \mathbb{N}$. Let $T = 0 \in
\mathcal{B}(C[0,1])$. It is easy to see that $TQ=0=QT$ and $0 \notin
\sigma_{SBF_{+}^{-}}(0) = \sigma_{SBF_{+}^{-}}(T)$, but $0 \in
\sigma_{SBF_{+}^{-}}(Q) = \sigma_{SBF_{+}^{-}}(0+Q) =
\sigma_{SBF_{+}^{-}}(T+Q)$. Hence $\sigma_{SBF_{+}^{-}}(T+Q) \neq
\sigma_{SBF_{+}^{-}}(T).$

  \end{upshape}
\end{example}

Rashid claims in [\cite{Rashid}, Theorem 3.16] that property $(gb)$
is stable under commuting quasi-nilpotent perturbations, but its
proof is rely on [\cite{Rashid}, Theorem 3.15] which, by Example
\ref{2.11}, is not always true. The following example shows
 that property $(gb)$ in general is not preserved under commuting quasi-nilpotent
 perturbations.

\begin {example}${\label{2.14}}$ \begin{upshape} Let $U: l_{2}(\mathbb{N}) \longrightarrow l_{2}(\mathbb{N})$
be the unilateral right shift operator defined by
$$U(x_{1},x_{2},\cdots )=(0,x_{1},x_{2}, \cdots ) \makebox{\ \ \  for all \ } (x_{n}) \in
l_{2}(\mathbb{N}).$$ Let $V: l_{2}(\mathbb{N}) \longrightarrow
l_{2}(\mathbb{N})$ be a quasi-nilpotent operator defined by
$$V(x_{1},x_{2},\cdots )=(0,x_{1},0,\frac{x_{3}}{3},\frac{x_{4}}{4} \cdots ) \makebox{\ \ \  for all \ } (x_{n}) \in
l_{2}(\mathbb{N}).$$ Let $N: l_{2}(\mathbb{N}) \longrightarrow
l_{2}(\mathbb{N})$ be a quasi-nilpotent operator defined by
$$N(x_{1},x_{2},\cdots )=(0,0,0,-\frac{x_{3}}{3},-\frac{x_{4}}{4} \cdots ) \makebox{\ \ \  for all \ } (x_{n}) \in
l_{2}(\mathbb{N}).$$ It is easy to verify that $VN=NV$. We consider
the operators $T$ and $Q$ defined by $T=U \oplus V$ and $Q=0 \oplus
N$, respectively. Then $Q$ is quasi-nilpotent and $TQ=QT$. Moreover,
$$\sigma(T) = \sigma(U) \cup \sigma(V) = \{\lambda \in
\mathbb{C}: 0 \leq |\lambda| \leq 1 \},$$
$$\sigma_{a}(T) = \sigma_{a}(U) \cup \sigma_{a}(V) = \{\lambda \in
\mathbb{C}: |\lambda| = 1 \} \cup \{0\},$$
$$\sigma(T+Q) = \sigma(U) \cup
\sigma(V+N) = \{\lambda \in \mathbb{C}: 0 \leq |\lambda| \leq 1 \}$$
and
$$\sigma_{a}(T+Q) = \sigma_{a}(U) \cup \sigma_{a}(V+N) = \{\lambda \in
\mathbb{C}: |\lambda| = 1 \} \cup \{0\}.$$ It follows that
$\Pi_{a}(T)=\Pi(T)= \varnothing$ and $\{0\}= \Pi_{a}(T+Q) \neq
\Pi(T+Q)= \varnothing.$ Hence by Lemma \ref{2.1}, $T+Q$ does not
satisfy property $(gb)$. But since $T$ has SVEP, $T$ satisfies
a-Browder's theorem or equivalently, by [\cite{AmouchM ZguittiH
equivalence}, Theorem 2.2], $T$ satisfies generalized a-Browder's
theorem. Therefore by Lemma \ref{2.1} again, $T$ satisfies property
$(gb)$.

  \end{upshape}
\end{example}

\begin {theorem}${\label{2.15}}$ Suppose that $T \in \mathcal{B}(X)$ obeys property $(gb)$ and that $Q \in \mathcal{B}(X)$ is a quasi-nilpotent operator commuting
with $T$. If $T$ is a-polaroid, then $T+Q$ obeys $(gb)$.

\end{theorem}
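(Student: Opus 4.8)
The plan is to use Lemma~\ref{2.1} at both ends: reduce property $(gb)$ for $T+Q$ to the conjunction of generalized a-Browder's theorem for $T+Q$ together with the equality $\Pi(T+Q)=\Pi_a(T+Q)$. The first half is cheap. Since $T$ satisfies property $(gb)$, by Lemma~\ref{2.1} it satisfies generalized a-Browder's theorem, which is equivalent to a-Browder's theorem by [\cite{AmouchM ZguittiH equivalence}, Theorem 2.2]; since a-Browder's theorem is preserved under commuting Riesz perturbations (formulas (\ref{eq 1.1})--(\ref{eq 1.4}), applied to the Riesz operator $Q$), $T+Q$ satisfies a-Browder's theorem, hence generalized a-Browder's theorem again by the same equivalence. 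So it remains to prove $\Pi(T+Q)=\Pi_a(T+Q)$.

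First I would record the spectral invariants. Because $Q$ is quasi-nilpotent and commutes with $T$, we have $\sigma(T+Q)=\sigma(T)$ and $\sigma_a(T+Q)=\sigma_a(T)$ (the latter is the Riesz-commuting stability of the approximate point spectrum, or can be obtained from the fact that $Q$ and $-Q$ are both quasi-nilpotent and commute with $T$ and $T+Q$ respectively). Consequently $\operatorname{iso}\sigma_a(T+Q)=\operatorname{iso}\sigma_a(T)$ and $\operatorname{iso}\sigma(T+Q)=\operatorname{iso}\sigma(T)$. Now I would use the a-polaroid hypothesis on $T$: it forces every isolated point of $\sigma_a(T)$ to be a left pole of $T$. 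Combined with the general containment $\operatorname{iso}\sigma(T)\subseteq\operatorname{iso}\sigma_a(T)$ and the identities $\Pi(T)=\operatorname{iso}\sigma(T)\setminus\sigma_D(T)$, $\Pi_a(T)=\operatorname{iso}\sigma_a(T)\setminus\sigma_{LD}(T)$, the a-polaroid condition on $T$ together with property $(gb)$ (which via Lemma~\ref{2.1} gives $\Pi(T)=\Pi_a(T)$, hence $\operatorname{iso}\sigma_a(T)=\operatorname{iso}\sigma(T)$ after unwinding) shows that $T$ is in fact polaroid as well: every isolated point of $\sigma(T)=\sigma_a(T)$'s isolated set is a pole.

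The core step is then to transfer poles and left poles from $T$ to $T+Q$. The key fact I would invoke is that an isolated point of the spectrum (resp. approximate point spectrum) is a pole (resp. left pole) precisely when the corresponding local spectral/ascent-descent data is finite, and that this data is stable under commuting quasi-nilpotent perturbations. Concretely: if $\lambda\in\operatorname{iso}\sigma(T+Q)=\operatorname{iso}\sigma(T)$, then by the polaroid property of $T$, $\lambda\in\Pi(T)$, so $T-\lambda$ is Drazin invertible; since $Q$ is quasi-nilpotent and commutes with $T-\lambda$, and Drazin invertibility is equivalent to having finite ascent and finite descent, the commuting-quasinilpotent stability of the Drazin spectrum (the same principle underlying Example~\ref{2.14}'s spectra, and available from [\cite{Kaashoek-Lay}, Theorem 2.2]-type results together with quasi-nilpotent perturbation theory, or directly since $\sigma_D$ is stable under commuting Riesz perturbations) gives that $T+Q-\lambda$ is Drazin invertible, i.e. $\lambda\in\Pi(T+Q)$. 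The same argument with $\sigma_{LD}$ in place of $\sigma_D$, using $\Pi_a(T+Q)=\operatorname{iso}\sigma_a(T+Q)\setminus\sigma_{LD}(T+Q)$, shows $\operatorname{iso}\sigma_a(T+Q)\subseteq\Pi_a(T+Q)$; since the reverse containment always holds, $\Pi_a(T+Q)=\operatorname{iso}\sigma_a(T+Q)$. Finally $\Pi(T+Q)=\operatorname{iso}\sigma(T+Q)=\operatorname{iso}\sigma(T)$ and $\Pi_a(T+Q)=\operatorname{iso}\sigma_a(T+Q)=\operatorname{iso}\sigma_a(T)$; since $T$ obeys property $(gb)$ these two isolated sets coincide, so $\Pi(T+Q)=\Pi_a(T+Q)$, and Lemma~\ref{2.1} finishes the proof.

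The main obstacle I anticipate is the stability statement $\sigma_D(T+Q)=\sigma_D(T)$ and especially $\sigma_{LD}(T+Q)=\sigma_{LD}(T)$ under commuting quasi-nilpotent $Q$ — note this cannot follow from the general Riesz machinery of (\ref{eq 1.1})--(\ref{eq 1.4}) alone for the \emph{left} Drazin spectrum, and indeed Example~\ref{2.13} warns that naive range-closedness arguments fail. What rescues the argument here is that we only need these equalities \emph{at isolated points of the relevant spectrum}, where left Drazin invertibility of $T-\lambda$ forces $T-\lambda$ to be Drazin invertible (isolated point of $\sigma$ plus finite ascent and closed range of a power gives finite descent too), reducing everything to the Drazin case, and Drazin invertibility is genuinely preserved under commuting quasi-nilpotent (indeed Riesz) perturbations. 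So the delicate point is to phrase the pole-transfer purely in terms of Drazin invertibility and to check carefully that $\operatorname{iso}\sigma_a(T+Q)=\operatorname{iso}\sigma_a(T)$ — for which one needs $\sigma_a(T+Q)=\sigma_a(T)$, and this is where the a-polaroid hypothesis is not used but the commuting quasi-nilpotency is essential.
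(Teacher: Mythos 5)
Your overall skeleton (Lemma \ref{2.1} at both ends, generalized a-Browder's theorem for $T+Q$ via the Riesz stability of $\sigma_{SF_{+}^{-}}$ and $\sigma_{UB}$, and the spectral invariances $\sigma(T+Q)=\sigma(T)$, $\sigma_{a}(T+Q)=\sigma_{a}(T)$) matches the paper's proof. But the core pole-transfer step is broken, and the breakage is precisely the error in Rashid's argument that this note is written to correct. You assert that Drazin invertibility (and $\sigma_{D}$, $\sigma_{LD}$) ``is genuinely preserved under commuting quasi-nilpotent (indeed Riesz) perturbations,'' and you use this to pass from $\lambda\in\Pi(T)$ to $\lambda\in\Pi(T+Q)$. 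This is false, even at isolated points of the spectrum: in Example \ref{2.13}, $T=0$ is Drazin invertible while $T+Q=Q$ (the Volterra operator, injective with $\mathcal{R}(Q^{n})$ never closed) has $dsc(Q)=\infty$ and is not Drazin invertible, so $0\notin\sigma_{D}(T)$ but $0\in\sigma_{D}(T+Q)$; the same example refutes the claim for $\sigma_{LD}$. Worse, your intermediate target $\Pi_{a}(T+Q)=\mathrm{iso}\,\sigma_{a}(T+Q)$ (that $T+Q$ is again a-polaroid) is itself false under the hypotheses of the theorem: take $T=I$ and $Q$ the Volterra operator. Then $T$ is a-polaroid and satisfies $(gb)$, and $1$ is isolated in $\sigma_{a}(I+Q)=\{1\}$, but $I+Q-1=Q$ is not left Drazin invertible, so $\Pi_{a}(I+Q)=\varnothing$. (The theorem survives because $\Pi(I+Q)=\varnothing$ as well.)

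The fix --- and the paper's actual argument --- is to run the transfer in the opposite direction, so that no perturbation of (left) Drazin invertibility is ever needed. Start from $\lambda\in\Pi_{a}(T+Q)$: then $T+Q-\lambda$ is \emph{already} left Drazin invertible, hence has eventual topological uniform descent, and $\lambda\in\mathrm{iso}\,\sigma_{a}(T+Q)=\mathrm{iso}\,\sigma_{a}(T)$. The a-polaroid hypothesis together with property $(gb)$ gives $\lambda\in\Pi_{a}(T)=\Pi(T)$, hence $\lambda\in\mathrm{iso}\,\sigma(T)=\mathrm{iso}\,\sigma(T+Q)$. Now the principle ``eventual topological uniform descent plus isolated spectral point implies pole,'' applied directly to $T+Q-\lambda$, yields $\lambda\in\Pi(T+Q)$; this is exactly what the paper invokes [\cite{Zeng-Zhong-Wu}, Theorem 3.12] for. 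You in fact state this very principle in your final paragraph (``isolated point of $\sigma$ plus finite ascent and closed range of a power gives finite descent too''), but you apply it to $T-\lambda$ and then still try to push Drazin invertibility across the perturbation; applying it to $T+Q-\lambda$ instead is what closes the gap.
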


\begin{proof} Since $T$ satisfies property $(gb)$, by Lemma
\ref{2.1}, $T$ satisfies generalized a-Browder's theorem and
$\Pi(T)=\Pi_{a}(T)$. Hence $T+Q$ satisfies generalized a-Browder's
theorem. In order to show that $T+Q$ satisfies property $(gb)$, by
Lemma \ref{2.1} again, it suffices to show that
$\Pi(T+Q)=\Pi_{a}(T+Q)$. Since $\Pi(T+Q) \subseteq \Pi_{a}(T+Q)$ is
always true, one needs only to show that $\Pi_{a}(T+Q) \subseteq
\Pi(T+Q)$.

Let $\lambda \in \Pi_{a}(T+Q)=\sigma_{a}(T+Q) \backslash
\sigma_{LD}(T+Q) = \makebox{iso}\sigma_{a}(T+Q) \backslash
\sigma_{LD}(T+Q)$. Then by [\cite{Mbekhta-Muller 9}], $\lambda \in
\makebox{iso}\sigma_{a}(T)$. Since $T$ is a-polaroid, $\lambda \in
\Pi_{a}(T) = \Pi(T)$. Thus by [\cite{Zeng-Zhong-Wu}, Theorem 3.12],
$\lambda \in \Pi(T+Q)$. Therefore $\Pi_{a}(T+Q) \subseteq \Pi(T+Q)$,
and this completes the proof.
\end{proof}

\end{document}